\theoremstyle{plain} 
\newtheorem{theorem}{Theorem}[section]
\newtheorem{lemma}[theorem]{Lemma}
\newtheorem{corollary}[theorem]{Corollary}
\theoremstyle{definition}
\newtheorem{example}{Example}[section]
\let\Oldenddefinition\enddefinition
\def\enddefinition{\hfill $\triangleleft$\Oldenddefinition}%
\let\Oldendexample\endexample%
\def\endexample{\hfill $\triangleleft$\Oldendexample}%
\newcommand{\opt}{\textsl{Opt}}
\newcommand{\nb}{\textsl{NoBull}}
\newcommand{\alm}{\textsl{Almost}}
\newcommand{\be}{\mathbf{e}}
\DeclareMathOperator{\best}{best}
\begin{document}
\title{No bullying! \\ A playful proof of Brouwer's fixed-point theorem}

\author[1]{Henrik Petri\thanks{This paper is based on a chapter of Petri's PhD dissertation at Stockholm School of Economics. For helpful comments and discussions, we thank Tommy Andersson, Kim Border, Albin Erlanson, Srihari Govindan, Jean-Jacques Herings, Atsushi Kajii, Wolfgang Leininger, Bart Lipman, Piotr Ma\'{c}kowiak, Marcin Malawski, John Nachbar, Phil Reny, Al Roth, Bill Sandholm, Rajiv Sethi, Ross Starr, Lars-Gunnar Svensson, Dolf Talman, William Thomson, Rabee Tourky, J\"{o}rgen Weibull, Zaifu Yang, and two anonymous referees. Special thanks to Andy McLennan, with whom we are working on a follow-up article addressing algorithmic aspects of our approach. Financial support by the Wallander-Hedelius Foundation under grants P2014-0189:1 (Petri), P2010-0094:1, and P2016-0072:1 (Voorneveld) is gratefully acknowledged.}}

\affil[1]{Department of Economics, University of Bath, 3 East, Bath BA2 7AY, UK, \url{henrik@petri.se}}

\author[2]{Mark Voorneveld}

\affil[2]{Department of Economics, Stockholm School of Economics, Box 6501, 113 83 Stockholm, Sweden, \url{mark.voorneveld@hhs.se}}

\maketitle

\begin{abstract}
We give an elementary proof of Brouwer's fixed-point theorem. The only mathematical prerequisite is a version of the Bolzano-Weierstrass theorem: a sequence in a compact subset of $n$-dimensional Euclidean space has a convergent subsequence with a limit in that set. Our main tool is a `no-bullying' lemma for agents with preferences over indivisible goods. What does this lemma claim? Consider a finite number of children, each with a single indivisible good (a toy) and preferences over those toys. Let's say that a group of children, possibly after exchanging toys, could bully some poor kid if all group members find their own current toy better than the toy of this victim. The no-bullying lemma asserts that some group $S$ of children can redistribute their toys among themselves in such a way that all members of $S$ get their favorite toy from $S$, but they cannot bully anyone.
\end{abstract}

\bigskip
\noindent {\small \textbf{Keywords:} Brouwer, fixed point, indivisible goods, KKM lemma, top trading cycles}

\noindent {\small \textbf{JEL codes:} C62, C63, C69, D51}

\bigskip \noindent Published in Journal of Mathematical Economics, 78 (2018), 1--5, 

\url{https://doi.org/10.1016/j.jmateco.2018.07.001}

\newpage
\section{Introduction}

The aim of this paper is to give a detailed, elementary proof of Brouwer's fixed-point theorem \citep[Satz 4]{Brouwer1911}: in Euclidean space $\mathbb{R}^n$, a continuous function from and to the unit simplex of nonnegative vectors with coordinates summing to one has a fixed point. The proof is accessible with a minimal mathematical background. Its main ingredient is a new `no-bullying' lemma (Lemma \ref{lemma: no bullying}) in a simple economic setting, \citeauthor{ShapleyScarf1974}'s \citeyearpar{ShapleyScarf1974} classical housing market model of agents with preferences over indivisible goods.

\cite{Park1999} gives an historical overview of many ways to prove Brouwer's fixed-point theorem; rather than repeating them here, we get straight to work.

What is the prerequisite `minimal mathematical background'? The only result our proof takes for granted is a version of the Bolzano-Weierstrass theorem: every sequence in a compact subset of $\mathbb{R}^n$, with its usual distance, has a convergent subsequence with a limit in this set.

And what is the no-bullying lemma? Consider a finite number of children, each with a single indivisible good (a toy) and preferences over those toys. Let's say that a group of children, possibly after exchanging toys, could bully some poor kid if all group members find their own current toy better than the toy of this victim. The no-bullying lemma asserts that some group $S$ of children can redistribute their toys among themselves in such a way that each member of $S$ gets his or her favorite toy from $S$ and they cannot bully anyone.

The no-bullying lemma with children caring about toys seems rather remote from Brouwer's fixed-point theorem. But the link is easier to understand after seeing how certain combinatorial proofs --- often using variants of Sperner's lemma \citep{Sperner1928} --- are structured; \citet[sec. 3]{Scarf1982} and \citet[Ch. 3--6]{Border1985} contain pedagogical accounts:

A key first step is to find solutions to a system of inequalities relating the coordinates of vectors to those of their function values. For instance, we will use that for each $\varepsilon > 0$ there is a set of vectors in $\Delta$ within distance $\varepsilon$ from each other that contains, for each coordinate $i$, an element $x$ with $x_i - \varepsilon \leq f_i(x)$. The link is provided by introducing economic agents \emph{to whom\/} it matters how large these coordinates are: we introduce toys that correspond with vectors and children that care about their coordinates --- and we're straight in the setting of the no-bullying lemma!

The second step is a standard limit argument: as $\varepsilon$ tends to zero, the Bolzano-Weierstrass theorem assures that those vectors may be chosen in such a way that they converge to a common limit $x^*$. This limit $x^*$ is the desired fixed point. Since all $x$'s tend to $x^*$ and $\varepsilon$ tends to zero, continuity of $f$ gives that $x^*_i \leq f_i(x^*)$ for all coordinates $i$. And the coordinates of $x^*$ and $f(x^*)$ both sum to one, so none of the inequalities can be strict: $x^* = f(x^*)$.

This gives us a clear road map for the remainder of our paper. We formulate the no-bullying lemma in Section \ref{sec: no-bullying implies Brouwer}. We explain the intuition why the no-bullying lemma implies Brouwer's fixed-point theorem and then prove it formally. The proof of the no-bullying lemma itself is in Section \ref{sec: proof no-bullying}. The strategy will be to slightly relax the requirements in the lemma and --- from a simple starting point --- make a series of small, explicitly defined changes until we arrive at a set of children and a reallocation of their toys that satisfy all conditions of the no-bullying lemma. Section \ref{sec: conclusion} contains concluding remarks; most of these are technical and the section can be skipped by anyone just interested in seeing how our proof works. The appendix discusses how other classical results in economic theory, the lemmas of \citet*{KKM1929} and \citet{Sperner1928}, follow from the no-bullying lemma.

\section{The no-bullying lemma and why it implies Brouwer}\label{sec: no-bullying implies Brouwer}

Consider a nonempty, finite set $I$ of children, each with a single toy. For simplicity, each child has strict preferences over toys and child $i$ starts out with toy $i$. Formally, $i$'s strict preferences are a binary relation $\succ_i$ on $I$; $x \succ_i y$ means that $i$ strictly prefers toy $x \in I$ to toy $y \in I$. These preferences are assumed to be (a) total: for all distinct $x, y \in I$, $x \succ_i y$ or $y \succ_i x$, (b) irreflexive: there is no $x$ with $x \succ_i x$, and (c) transitive: if $x \succ_i y$ and $y \succ_i z$, then also $x \succ_i z$. Since no two toys are equivalent, $i$ has a well-defined most preferred element $\best_i(Y)$ in any nonempty subset $Y$ of toys.

Say that a group of children could \emph{bully\/} another child if its members agree that this victim has a lousy toy: each group member finds his or her toy better than the victim's toy. According to the no-bullying lemma, some group $Y$ of children can exchange their toys among themselves in such a way that all members of $Y$ receive their best toy from $Y$, yet they cannot bully anyone:

\begin{lemma}[No-bullying lemma]\label{lemma: no bullying}
Let $I$ be a nonempty, finite set. For each $i \in I$, let $\succ_i$ be a strict preference relation on $I$. Then there is a nonempty $Y \subseteq I$ satisfying:
\begin{enumerate}
\item[(a)] Optimality: each $i \in Y$ can get her most preferred element of $Y$, i.e., $\{\best_i(Y): i \in Y\} = Y$.
\item[(b)] No Bullying: there is no $x \in I$ with\/ $\best_i(Y) \succ_i x$ for all $i \in Y$.
\end{enumerate}
\end{lemma}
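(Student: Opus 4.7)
My plan is to pick $Y$ as an extremal element of a combinatorial search over the subsets already satisfying condition (a), and then argue that (b) is forced by extremality.

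I would first observe that (a) says exactly that the map $\sigma_Y(i)=\best_i(Y)$ is a bijection of $Y$ onto itself; equivalently, $Y$ is partitioned by the orbits of $\sigma_Y$ into \emph{trading cycles}, inside each of which the children would happily rotate toys. The collection of such $Y$'s is clearly nonempty: every singleton $\{i\}$ is a one-element trading cycle with $\sigma_Y=\mathrm{id}$, and the functional digraph $i\mapsto\best_i(I)$ has at least one cycle by finiteness. So (a) in isolation is very easy, and the content of the lemma is really the compatibility of (a) with (b).

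Among all $Y$ satisfying (a), I would select one maximizing the potential $\Phi(Y)=\sum_{i\in Y} r_i(\best_i(Y))$, where $r_i(y)\in\{1,\dots,|I|\}$ is the position of $y$ in $i$'s strict preference (smaller values meaning more-preferred toys). The intuition is that larger $\Phi$ corresponds to group members being ``less satisfied,'' hence less equipped to bully outsiders. Suppose a maximizer $Y^*$ fails (b) and let $x\in I$ be bullied, so $\best_i(Y^*)\succ_i x$ for every $i\in Y^*$. Because each old member already ranks $\best_i(Y^*)$ above $x$, enlarging to $Y'=Y^*\cup\{x\}$ leaves $\best_i$ unchanged on $Y^*$; hence if additionally $\best_x(Y')=x$ (Case A, when $x$ ranks her own toy above every member of $Y^*$), the enlarged pair satisfies (a) with $\sigma_{Y'}(x)=x$, and $\Phi(Y')=\Phi(Y^*)+r_x(x)\geq\Phi(Y^*)+1$, contradicting maximality.

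The remaining Case B, where $x$ strictly prefers some $y\in Y^*$ over her own toy, is the obstacle I expect to be hardest. Naively dropping $x$ into $Y^*$ breaks the bijection: $y$ is doubly targeted while $x$ is unwanted. The ``small, explicitly defined changes'' the authors allude to are presumably a chain-chasing construction along the lines of tracing $x\to y\to\sigma_{Y^*}^{-1}(y)\to\cdots$, redirecting each displaced child in turn until a new trading cycle closes up and yields a modified $Y$ that still satisfies (a) and has strictly larger $\Phi$. The central technical task would be proving that this chain always terminates in a legitimate configuration, thus contradicting maximality of $Y^*$ and forcing (b) to hold at any maximizer.
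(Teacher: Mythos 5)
Your approach is genuinely different from the paper's: you try to maximize a potential $\Phi(Y)=\sum_{i\in Y} r_i(\best_i(Y))$ over the subsets already satisfying Optimality, whereas the paper runs a parity-style path-following argument over pairs $(Y,Z)$ that satisfy a slight relaxation of the two conditions and moves along neighbors until it reaches a pair with $Y=Z$. Your Case A is sound: if the victim $x$ prefers her own toy to everything in $Y^*$, then for every $i\in Y^*$ the bullying inequalities give $\best_i(Y^*\cup\{x\})=\best_i(Y^*)$, so $Y^*\cup\{x\}$ still satisfies (a), and $\Phi$ strictly increases.

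But Case B is the whole ballgame, and the sketch does not resolve it. The obstruction is that Optimality is a property of the \emph{set} $Y$, not of an assignment you get to choose: once $\best_x(Y^*\cup\{x\})=y$ collides with $\best_z(Y^*)=y$ for the unique $z\in Y^*$ with that image, you cannot simply ``reroute'' $x$; you must change the set $Y$, and any such change simultaneously changes every $\best_i(Y)$. If you drop $z$, the child whose favorite in $Y^*$ was $z$ is now displaced and the cascade continues. If instead you delete the entire cycle of the map $i\mapsto\best_i(Y^*)$ through $y$ before inserting $x$, you have no control over where $x$'s new favorite in the shrunken set lands, and $\Phi$ loses a whole cycle's worth of rank while gaining only $r_x(\cdot)$, so even when the repaired set satisfies (a), there is no guarantee the potential goes up. Both of these points --- termination of the repair in a legal configuration, and monotonicity of $\Phi$ along it --- are exactly what needs proof, and neither is established or, as far as I can see, obviously true; indeed you would first need to establish the unproved central claim that a $\Phi$-maximizer satisfies No Bullying at all. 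It is worth noting that the paper's argument is precisely the kind of parity/path-following proof one expects for a Sperner-type existence statement, and proving the same result by a monotone local improvement of a single potential would be a substantively stronger and rather surprising claim; the missing Case B should therefore be regarded as a serious gap, not a technicality to be filled in later.
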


Observe that No Bullying implies Optimality: if there were an $x \in Y \setminus \{\best_i(Y): i \in Y\}$, then $\best_i(Y) \succ_i x$ for all $i \in Y$, contradicting No Bullying. But it is convenient to state both properties explicitly: we often refer to Optimality in our proofs.

We prove the no-bullying lemma in the next section. It easily extends to distinct sets $I$ of children and $T$ of toys, given some endowment $\ell: T \to I$ mapping toys to their initial owner. The trick is to identify both the set of children and the set of toys with $\{(i, t) \in I \times T: i = \ell(t)\}$. So toys are labeled with their initial owners. And we replace child $i$ with replicas, one for each toy she owns, with the same preferences as $i$.

\begin{corollary}\label{cor: no bullying}
Let each $i$ in a nonempty, finite set $I$ of children have strict preferences $\succ_i$ over a nonempty, finite set $T$ of toys. A function $\ell: T \to I$ maps each toy $t \in T$ to its initial owner $\ell(t) \in I$. Then there are nonempty subsets $C$ of children and $E$ of toys they can exchange satisfying:
\begin{itemize}
\item[(a)] Ownership: the members of $C$ own the elements of $E$, i.e., $C = \{\ell(t): t \in E\}$.
\item[(b)] Optimality: each $i \in C$ can get her most preferred element of $E$, i.e., $\{\best_i(E): i \in C\} = E$.
\item[(c)] No Bullying: there is no $t \in T$ with\ $\best_i(E) \succ_i t$ for all $i \in C$.
\end{itemize}
\end{corollary}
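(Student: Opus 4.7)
The plan is to reduce the corollary to Lemma~\ref{lemma: no bullying} via the identification sketched in the paper: both children and toys are represented by elements of
\[
J := \{(i,t) \in I \times T : i = \ell(t)\},
\]
in which each toy $t$ appears exactly once (paired with its unique owner $\ell(t)$) and each child $i$ is replaced by one replica per toy she owns. I would first equip $J$ with strict preferences: for $j = (i,t) \in J$, declare $(i',t') \succ_j (i'',t'')$ iff $t' \succ_i t''$. Since distinct members of $J$ differ in their toy component, totality, irreflexivity, and transitivity of $\succ_i$ carry over without change.

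Applying Lemma~\ref{lemma: no bullying} to $J$ with these preferences yields a nonempty $Y \subseteq J$ satisfying Optimality and No Bullying inside $J$. I would then set
\[
E := \{t : (i,t) \in Y\}, \qquad C := \{i : (i,t) \in Y\}.
\]
Ownership is immediate from the definition of $J$: $C = \{\ell(t) : t \in E\}$.

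The heart of the argument---and essentially the only step that is not bookkeeping---is to show that $Y$ contains at most one replica of each child. If $(i,t_1),(i,t_2) \in Y$ with $t_1 \neq t_2$, then the two replicas rank $Y$ by the same underlying $\succ_i$ on toys and hence share the same best element of $Y$; but then $|\{\best_j(Y):j\in Y\}| < |Y|$, contradicting the Optimality conclusion $\{\best_j(Y):j\in Y\}=Y$ in $J$. Consequently, the projections $Y\to C$ and $Y\to E$ are both bijections; writing $(i,t(i))$ for the unique replica of $i \in C$ in $Y$, the $\succ_{(i,t(i))}$-best element of $Y$ is exactly the pair corresponding to $\best_i(E)$. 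Projecting the equality $\{\best_j(Y):j\in Y\}=Y$ onto the toy coordinate then gives $\{\best_i(E):i\in C\}=E$, which is Optimality in the corollary.

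No Bullying will follow by contradiction: if some $t \in T$ satisfied $\best_i(E)\succ_i t$ for every $i\in C$, then the element $x := (\ell(t),t) \in J$ would satisfy $\best_j(Y)\succ_j x$ for every $j\in Y$, violating No Bullying on $J$. The only genuinely nontrivial ingredient is the one-replica-per-child observation used to get the bijections; everything else translates mechanically through the identification $J \leftrightarrow T$.
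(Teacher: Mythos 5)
Your proof is correct and follows essentially the same route as the paper's: define the replicated set $J=\{(\ell(t),t):t\in T\}$, lift the preferences, apply Lemma~\ref{lemma: no bullying}, and read off $C$ and $E$ by projection. The only difference is your ``one replica per child'' observation; it is a true consequence of Optimality, but it is not actually needed for the translation --- since the toy-coordinate projection $J\to T$ is already a bijection, the identity $\best_{(\ell(t),t)}(Y)=(\ell(\best_{\ell(t)}(E)),\best_{\ell(t)}(E))$ and hence the set equality $\{\best_i(E):i\in C\}=E$ follow directly by taking images, with or without injectivity of the child-coordinate projection.
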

\begin{proof}
Let $I^* = \{(\ell(t), t): t \in T\}$. For each $(\ell(t), t) \in I^*$, define preferences $\succ_{(\ell(t), t)}$ on $I^*$ such that
\begin{equation}\label{eq: replicated preferences}
(\ell(t), t) \text{ inherits $\ell(t)$'s preferences:} \qquad (\ell(t_1), t_1) \succ_{(\ell(t), t)} (\ell(t_2), t_2) \iff t_1 \succ_{\ell(t)} t_2.
\end{equation}
By Lemma \ref{lemma: no bullying}, a nonempty $Y \subseteq I^*$ satisfies Optimality and No Bullying. Let $E = \{t: (\ell(t), t) \in Y\}$ be the exchanged toys and $C = \{ \ell(t): (\ell(t), t) \in Y\} = \{\ell(t): t \in E\}$ their owners. By \eqref{eq: replicated preferences}, the best toy of $(\ell(t), t) \in Y$ from $Y$ is the best toy of $\ell(t) \in C$ from $E$. So the Optimality and No Bullying properties for $C$ and $E$ simply rephrase the corresponding properties of $Y$.
\end{proof}

We use the no-bullying lemma to find approximations of fixed points:

\begin{lemma}\label{lemma: AFP}
Let $f$ be a continuous function from and to $\Delta = \{x \in \mathbb{R}^n: x_1, \ldots, x_n \geq 0 \text{ and } \sum_{i=1}^n x_i = 1\}$ in some $n$-dimensional Euclidean space \, $\mathbb{R}^n$. For each $\varepsilon > 0$ there is a set $T(\varepsilon) \subseteq \Delta$ such that
\begin{align}
&\text{for all $x$ and $y$ in $T(\varepsilon)$: \quad $\max_i \, \lvert x_i - y_i \rvert < \varepsilon$,} \label{eq: nearby} \\
&\text{for all $i \in \{1, \ldots, n\}$ there is an $x$ in $T(\varepsilon)$ with $x_i - \varepsilon \leq f_i(x)$.} \label{eq: AFP}
\end{align}
\end{lemma}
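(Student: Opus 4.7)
The plan is to apply Corollary~\ref{cor: no bullying} to a sufficiently fine uniform grid on $\Delta$. Given $\varepsilon > 0$, I will choose an integer $N$ large enough that $n/N < \varepsilon$ and that the oscillation of $f$ over any $L^\infty$-ball in $\Delta$ of radius $n/N$ stays below $\varepsilon/(2n)$; uniform continuity of $f$ on the compact simplex makes this possible. I take the toys to be $T := G := \{v \in \Delta : Nv \in \mathbb{Z}^n_{\geq 0}\}$, the children $I := \{1, \ldots, n\}$, and I assign to each grid point $v$ the owner
\[
\ell(v) := \min\{i : v_i > 0 \text{ and } v_i \geq f_i(v)\},
\]
which is well defined because $\sum_i (v_i - f_i(v)) = 0$ combined with $f_i(v) \geq 0$ forces some coordinate with $v_i > 0$ to satisfy $v_i \geq f_i(v)$. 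Child $i$ ranks toys by the $i$-th coordinate (higher being better), with any fixed rule breaking ties to make preferences strict. Applying Corollary~\ref{cor: no bullying} then produces nonempty $C \subseteq I$ and $E \subseteq T$, and I will set $T(\varepsilon) := E$.

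My first task is to show $C = I$. Suppose some $j \in I$ is absent from $C$. Then for every $i \in C$ we have $i \neq j$, so $(\be_j)_i = 0$, while $\best_i(E)_i > 0$ because some $v \in E$ has $\ell(v) = i$ and therefore $v_i > 0$. Consequently $\best_i(E) \succ_i \be_j$ for every $i \in C$, so $E$ would bully the grid point $\be_j \in G$, contradicting No Bullying. Thus $C = I$, and Ownership now supplies, for each coordinate $i$, some $u^{(i)} \in E$ with $u^{(i)}_i \geq f_i(u^{(i)})$.

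For the diameter bound \eqref{eq: nearby}, I set $M_i := \best_i(E)_i = \max_{v \in E} v_i$ and $s := \sum_i M_i - 1 \geq 0$; each $M_i$ is a positive integer multiple of $1/N$. If $s \geq n/N$ I can choose integers $a_i \in [1, NM_i]$ with $\sum_i a_i = sN$ (feasible because the bounds satisfy $n \leq sN \leq N(1+s)$), and then $v^* := (M_i - a_i/N)_{i=1}^n$ is a point of $G$ with $v^*_i < M_i$ for every $i \in I = C$, contradicting No Bullying. So $s < n/N$, and since any $v \in E$ satisfies $v_i \geq 1 - \sum_{j \neq i} M_j = M_i - s$, the spread of each coordinate over $E$ is at most $s < n/N < \varepsilon$. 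For \eqref{eq: AFP}, I fix an arbitrary $v \in E$ and compare each $u^{(i)}_i - f_i(u^{(i)})$ with $v_i - f_i(v)$; the just-established diameter bound together with uniform continuity keep this difference within $n/N + \varepsilon/(2n)$, so summing over $i$ and using $\sum_i (v_i - f_i(v)) = 0$ gives
\[
\sum_i \bigl[u^{(i)}_i - f_i(u^{(i)})\bigr] \leq n \cdot n/N + n \cdot \varepsilon/(2n) \leq \varepsilon.
\]
Since each summand is non-negative by Ownership, each is below $\varepsilon$, which is exactly \eqref{eq: AFP} for $x = u^{(i)}$.

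I expect the combinatorial construction of the grid point $v^*$ in the diameter step to be the main technical obstacle. One must strictly undercut $E$ in every single coordinate at once while honouring both the integrality condition $Nv^* \in \mathbb{Z}^n$ and the simplex sum constraint $\sum_i v^*_i = 1$; the slack $s$ is precisely what governs feasibility of the integer allocation $a_i \in [1, NM_i]$ with $\sum_i a_i = sN$, and writing this out carefully is the delicate bookkeeping that the full proof must carry out.
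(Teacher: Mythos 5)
Your proof follows the same overall strategy as the paper -- apply Corollary~\ref{cor: no bullying} to a uniform grid, with one child per coordinate, to extract the set $E$ -- but you invert the key design choices in a way that changes the difficulty profile. You let child $i$ prefer \emph{larger} $i$-th coordinates and define the endowment by $\ell(v) = \min\{i: v_i > 0 \text{ and } v_i \geq f_i(v)\}$; the paper instead lets child $i$ prefer \emph{smaller} $i$-th coordinates and sets $\ell(x) = \min\{i : x_i \leq f_i(x)\}$. That single sign change has a large downstream effect: the paper's Ownership condition immediately hands you $x_i \leq f_i(x)$ for some $x \in E$, which \emph{is} \eqref{eq: AFP} without further work, whereas your Ownership condition produces $u^{(i)}_i \geq f_i(u^{(i)})$, the wrong inequality, forcing you to recover \eqref{eq: AFP} indirectly by bounding $u^{(i)}_i - f_i(u^{(i)})$ from above. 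That is why you end up needing uniform continuity of $f$ -- a tool the paper never touches and which slightly undercuts its ``Bolzano--Weierstrass is the only prerequisite'' selling point. A side benefit of your construction is the extra clause $v_i > 0$ in the endowment, which lets you force $C = I$ (otherwise the vertex $\be_j$ is a bullying victim); the paper instead lives with $C \subsetneq I$ and handles $i \notin C$ via a separate bound \ref{O3}. Your combinatorial step proving $s := \sum_i M_i - 1 < n/N$ is correct and is essentially the mirror image of the paper's \ref{O2}.

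There is also a concrete arithmetic gap in your final step. From the telescoping you get
\[
\sum_{i} \bigl(u^{(i)}_i - f_i(u^{(i)})\bigr) < n\cdot\frac{n}{N} + n\cdot\frac{\varepsilon}{2n} = \frac{n^2}{N} + \frac{\varepsilon}{2},
\]
and you assert this is $\leq \varepsilon$, i.e., $n^2/N \leq \varepsilon/2$. But your stated condition on $N$ was only $n/N < \varepsilon$, which gives $n^2/N < n\varepsilon$ -- not $\leq \varepsilon/2$ for any $n \geq 1$. You need to strengthen the choice of $N$ to something like $2n^2/N \leq \varepsilon$ (in addition to the oscillation condition). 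This is easily patched, but as written the constants do not close. Once you repair the choice of $N$, the argument goes through, though the paper's version remains shorter and more elementary precisely because its endowment is pointed in the direction that makes \eqref{eq: AFP} free.
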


Before we give the intuition behind Lemma \ref{lemma: AFP} and prove it, we show that $f$ has a fixed point via a standard limit argument. Let $\varepsilon_1, \varepsilon_2, \ldots$ be a sequence of positive numbers converging to zero. For each such $\varepsilon_m > 0$, take a set $T(\varepsilon_m)$ as in the lemma and an element $x(m) \in T(\varepsilon_m)$. These $x(1), x(2), \ldots$ lie in the compact set $\Delta$. By the Bolzano-Weierstrass theorem we may pass to a subsequence if necessary and assume that they converge to a limit $x^* \in \Delta$. By \eqref{eq: nearby}, the distance between the elements of $T(\varepsilon_m)$ tends to zero as $\varepsilon_m \to 0$. So \emph{all\/} sequences obtained by assigning to each integer $m$ an element of $T(\varepsilon_m)$ converge to $x^*$. Taking limits, \eqref{eq: AFP} and continuity of $f$ then give that $x^*_i \leq f_i(x^*)$ for each $i$. Since the coordinates of $x^*$ and $f(x^*)$ both sum to one, these weak inequalities must be equalities: $x^* = f(x^*)$, proving Brouwer's fixed-point theorem.

But it looks like a huge step from the no-bullying lemma to Lemma \ref{lemma: AFP}. What is the intuition? Fix a large, finite subset of $\Delta$ by giving coordinates only finitely many values. This grid is our set of toys. Introduce one child for each coordinate $i \in \{1, \ldots, n\}$. Child $i$ prefers points with small $i$-th coordinates. Now apply Corollary \ref{cor: no bullying}. With suitable initial endowments, the set of exchanged toys satisfies the conditions on approximate fixed points in Lemma \ref{lemma: AFP}. Roughly speaking, if the no-bullying lemma gives each child $i$ in subset $C$ a toy, Optimality and $i$ liking small $i$-th coordinates imply that its $i$-th coordinate can't be very large. But such coordinates can't be very small either: then it were possible to define a vector where all coordinates $i \in C$ are a bit larger. So all members of $C$ find this vector strictly worse, contradicting No Bullying. This doesn't leave much room to manoeuver in: the exchanged toys lie near each other, as \eqref{eq: nearby} says.

The proof sketch hasn't mentioned function $f$ yet, let alone why \eqref{eq: AFP} holds. The initial endowment provides this link. Each toy changing hands is owned by some child in $C$. The endowment is chosen such that if $i \in C$ owns $x$, then $x_i \leq f_i(x)$. This clearly implies the inequality $x_i - \varepsilon \leq f_i(x)$ in \eqref{eq: AFP}. And if $i$ lies outside $C$, the left side of \eqref{eq: AFP} turns out to be nonpositive for all exchanged toys, but its right side is nonnegative: function values lie in $\Delta$. Now the formal proof:

\begin{proof}[Proof of Lemma \ref{lemma: AFP}]
Let $\varepsilon > 0$. Let positive integer $N$ satisfy $2n/N < \varepsilon$. Apply Corollary \ref{cor: no bullying} to toys
\[
T = \{x \in \Delta: x_i \in \{0/N, 1/N, \ldots, N/N\} \text{ for all } i = 1, \ldots, n\},
\]
elements of $\Delta$ whose coordinates are multiples of $1/N$, and children $I = \{1, \ldots, n\}$, one per coordinate. For each $i \in I$, let $\succ_i$ be any strict preference on $T$ where $i$ prefers smaller $i$-th coordinates: for all $x, y \in T$, if $x_i < y_i$, then $x \succ_i y$. Such preferences are not unique: $i$ may order vectors with identical $i$-th coordinates arbitrarily.\footnote{E.g., $i$ may have lexicographic preferences and look at the coordinates in some fixed order, starting with coordinate $i$. Let $x \succ_i y$ if $x_j \neq y_j$ for some coordinate $j$ and, in the fixed order of the coordinates, the first such $j$ has $x_j < y_j$.}
If $x \in T$, there is an $i$ with $x_i \leq f_i(x)$, since the coordinates of $x$ and $f(x)$ both sum to 1. Define endowments $\ell: T \to I$ for each $x \in T$ by $\ell(x) = \min \{i \in \{1, \ldots, n\}: x_i \leq f_i(x)\}$.

By Corollary \ref{cor: no bullying}, there are subsets $C$ of $I$ and $E$ of $T$ satisfying Ownership, Optimality, and No Bullying. We show that Lemma \ref{lemma: AFP} holds if we take $T(\varepsilon) := E$, the set of exchanged toys.

For $i \in C$, let $\beta(i) := \best_i(E)$ be $i$'s favorite toy in $E$. We first prove 3 observations:
\begin{enumerate*}[label=(O\arabic*)]
\item\label{O1} for each $x \in E$ and $i \in C$, $\beta_i(i) \leq x_i$;
\item\label{O2} $\sum_{i \in C} \beta_i(i) > 1 - \frac{n}{N}$, and
\item\label{O3} for each $x \in E$ and $i \notin C$, $0 \leq x_i < \frac{n}{N}$.
\end{enumerate*}

\ref{O1} holds because $i \in C$ finds $\beta(i)$ better than all other elements of $E$ and $i$ likes small $i$-th coordinates. For \ref{O2}: if, to the contrary, $\sum_{i \in C} \beta_i(i) \leq 1 - \frac{n}{N}$, then $0 \leq \sum_{i \in C} \left(\beta_i(i) + \frac{1}{N} \right) \leq 1$. So there is an $x \in T$ with $x_i = \beta_i(i) + \frac{1}{N}$ if $i \in C$. Then $\beta_i(i) < x_i$, so $\beta(i) \succ_i x$ for all $i \in C$, contradicting No Bullying. \ref{O3} follows from \ref{O1} and \ref{O2}: if $x \in E$ and $i \notin C$, then
\[
0 \leq x_i \leq \sum_{m \notin C} x_m = 1 - \sum_{m \in C} x_m \stackrel{\scriptstyle \ref{O1}}{\leq} 1 - \sum_{m  \in C} \beta_m(m) \stackrel{\scriptstyle \ref{O2}}{<} \frac{n}{N}.
\]
Now \eqref{eq: nearby} holds: Let $x, y \in E$ and $i \in I$. We show that $\lvert x_i - y_i \rvert < \varepsilon$. If $i \in C$, then
\[
0 \stackrel{\scriptstyle \ref{O1}}{\leq} x_i - \beta_i(i) \leq \sum_{m \in C} \left( x_m - \beta_m(m) \right) \leq 1 - \sum_{m \in C} \beta_m(m) \stackrel{\scriptstyle \ref{O2}}{<} \frac{n}{N} < \frac{\varepsilon}{2},
\]
and similarly for $y$. By the triangle inequality, $\lvert x_i - y_i \rvert \leq \lvert x_i - \beta_i(i) \rvert + \lvert \beta_i(i) - y_i \rvert < \varepsilon$. If $i \notin C$, then the triangle inequality and \ref{O3} give $\lvert x_i - y_i \rvert \leq \lvert x_i \rvert + \lvert y_i \rvert < 2n/N < \varepsilon$.

Also \eqref{eq: AFP} holds: Let $i \in \{1, \ldots, n\}$. If $i \in C$, then by Ownership, there is an $x \in E$ with $\ell(x) = i$, so $x_i - \varepsilon < x_i \leq f_i(x)$. If $i \notin C$, then for each $x \in E$, $x_i \leq n/N < \varepsilon$ by \ref{O3}. So $x_i - \varepsilon < 0 \leq f_i(x)$.
\end{proof}

\medskip
\noindent \textbf{Some remarks about the top-trading cycle algorithm:} We formulated the no-bullying lemma in familiar economic vocabulary: agents with preferences over indivisible goods, as in \citet{ShapleyScarf1974}. Their article is famous for introducing the top-trading-cycle (TTC) algorithm for socially stable allocations of indivisible goods, so the question naturally arises whether the resulting allocations have anything in common.

On the positive side, Optimality says that owners \emph{within a subset $S$\/} redistribute their toys so that each member receives her most preferred one: if you restrict attention to the subproblem reduced to agents and their toys in $S$, the whole market clears already in the very first iteration of the TTC algorithm with everybody obtaining their favorite item. But you cannot simply ignore the agents outside $S$ and this makes the connection with the TTC algorithm only superficial: in the following example, no member of the only set $S$ satisfying the conditions of the no-bullying lemma gets the same item as in the TTC algorithm.

\begin{example}
Consider three children with preferences over their toys as follows:
\[
2 \succ_1 1 \succ_1 3, \qquad 3 \succ_2 2 \succ_2 1, \qquad 2 \succ_3 1 \succ_3 3.
\]
In the first iteration of the TTC algorithm, 2 and 3 exchange toys. In the second iteration, 1 is stuck with toy 1. But neither $S = \{2, 3\}$ nor $S = \{1\}$ satisfies No Bullying: 2 and 3 can bully 1, since
\[
\best_2(\{2,3\}) = 3 \succ_2 1 \qquad \text{and} \qquad \best_3(\{2,3\}) = 2 \succ_3 1.
\]
Likewise, 1 can bully 3, since $\best_1(\{1\}) = 1 \succ_1 3$.  Checking the remaining candidates, it follows that only $S = \{3\}$ --- child 3 holding on to toy 3 --- satisfies the conditions of the no-bullying lemma. But in the TTC algorithm, child 3 was allocated toy 2 instead of toy 3.
\end{example}

Intuitively, if you get an item in an iteration of the TTC algorithm, then it is the remaining item you like most. Whether you are a potential target for bullying is about something different, namely whether \emph{others\/} find your item worse than their allotment. They have little to do with each other. The no-bullying lemma does not aim for efficiency: Optimality requires owners \emph{within a subset\/} $S$ to redistribute their toys optimally, but No Bullying pushes in the opposite direction: members of $S$ shouldn't get toys that are too good, because then it is easier to find a target for bullying. So the lemma is not a normative principle of distributive justice. It is just a useful tool in our proofs, cast in a language that hopefully makes it easier to remember.

\section{Proof of the no-bullying lemma}\label{sec: proof no-bullying}

Fix a nonempty finite set $I$ of children/toys and strict preferences $\succ_i$ over $I$ for each child $i \in I$. A pair $(Y, Z)$ of nonempty subsets $Y$ (children) and $Z$ (toys) of $I$ that satisfies
\begin{alignat}{2}
& \text{Optimality (\opt):}  && \{\best_i(Z): i \in Y\} = Z, \label{eq: Opt} \\
& \text{No Bullying (\nb):}\quad && \text{there is no $x \in I$ with $\best_i(Z) \succ_i x$ for all $i \in Y$,} \label{eq: NoBu}
\intertext{and $Y = Z$ proves the no-bullying lemma \ref{lemma: no bullying}: just plug $Y = Z$ into \eqref{eq: Opt} and \eqref{eq: NoBu}. To ease the search, allow pairs where $Y$ and $Z$ are almost the same --- the number of elements $|Y \setminus Z|$ of $Y \setminus Z$ is small:}
& \text{Almost (\alm):} \quad     && |Y \setminus Z| \leq 1. \notag
\end{alignat}
Pair $(Y, Z)$ is a \emph{candidate\/} if it satisfies \alm\ and \nb. Just as in Lemma \ref{lemma: no bullying}, each candidate satisfies \opt: if there were an $x \in Z \setminus \{\best_i(Z): i \in Y\}$, then $\best_i(Z) \succ_i x$ for all $i \in Y$, contradicting \eqref{eq: NoBu}. We find a candidate with $Y = Z$ by starting with a simple candidate and making a series of one-element adjustments.

A neighbor of candidate $(Y,Z)$ is a candidate obtained by adding or removing a single element in exactly one of the sets $Y$ or $Z$. Formally, the symmetric difference of sets $A$ and $B$ is $A \bigtriangleup B = (A \setminus B) \cup (B \setminus A)$. Candidates $(Y,Z)$ and $(Y',Z')$ are \emph{left neighbors\/} if $|Y \bigtriangleup Y'| = 1$ and $Z = Z'$; they are \emph{right neighbors\/} if $Y = Y'$ and $|Z \bigtriangleup Z'| = 1$; they are \emph{neighbors\/} if they are left or right neighbors.

We often use \emph{monotonicity\/}: if $(Y,Z)$ satisfies \alm, then so do $(Y', Z)$ for any smaller $Y' \subseteq Y$ and $(Y,Z')$ for any larger $Z' \supseteq Z$. And if $(Y,Z)$ satisfies \nb, then so do $(Y',Z)$ for any larger $Y' \supseteq Y$ and $(Y,Z')$ for any smaller $\emptyset \neq Z' \subseteq Z$. Only the last point is nontrivial, but if members of $Y$ can't bully using the best elements in the large set $Z$, they surely can't with those in subset $Z'$.

Lemmas \ref{lemma: singleton contender} to \ref{lemma: Y larger than Z} characterize the neighbors of candidates $(Y,Z)$ with $Y \neq Z$. \opt\ gives $|Y| \geq |\{\best_i(Z): i \in Y\}| = |Z|$. So by \alm, $Y$ has $|Z|$ or $|Z|+1$ elements. Lemma \ref{lemma: singleton contender} treats the candidates with $|Y| = |Z| = 1$ and Lemma \ref{lemma: Y and Z equal size} those with $|Y| = |Z| \geq 2$; in the latter case, $Y \neq Z$ and \alm\ imply that $Y \setminus Z$ and $Z \setminus Y$ are singletons. In particular, $Z \setminus Y = \{k\}$ for some $k \in I$. Finally, Lemma \ref{lemma: Y larger than Z} addresses candidates with $|Y| = |Z|+1$; by \alm, $Z$ is then a proper subset of $Y$. And since $Y$ has one element more than $\{\best_i(Z): i \in Y\} = Z$, there is a unique pair $j_1$ and $j_2$ of distinct elements in $Y$ with the same best element of $Z$: $\best_{j_1}(Z) = \best_{j_2}(Z)$.

For $i \in I$, let $w_i$ be $i$'s worst element of $I$: $x \succ_i w_i$ for all $x \neq w_i$. Let $B_i = (\{i\}, \{w_i\})$.

\begin{lemma}\label{lemma: singleton contender}
For each $i \in I$, $B_i$ is the unique candidate whose first component is $\{i\}$.  If $i \neq w_i$, then its unique neighbor is $(\{i,w_i\},\{w_i\})$.
\end{lemma}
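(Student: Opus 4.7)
\emph{First claim.} My plan is to start from any candidate of the form $(\{i\},Z)$ and squeeze it down to $B_i$ using No Bullying. With $Y=\{i\}$ a singleton, No Bullying reads: there is no $x \in I$ with $\best_i(Z) \succ_i x$. Since $\succ_i$ is a strict total order on the finite set $I$, its unique $\succ_i$-minimal element is $w_i$, forcing $\best_i(Z)=w_i$. Because $w_i$ is $i$'s globally worst toy, any other element of $Z$ would beat $w_i$ under $\succ_i$ and contradict $\best_i(Z)=w_i$; hence $Z=\{w_i\}$, giving $(\{i\},Z)=B_i$. A one-line check verifies $B_i$ itself is a candidate: Almost is immediate, and No Bullying holds vacuously because no $x$ satisfies $w_i \succ_i x$.

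\emph{Second claim, left neighbors.} Now assume $i \neq w_i$ and enumerate neighbors of $B_i$ by whether the first or second component changes. A left neighbor fixes $Z=\{w_i\}$ and modifies $Y=\{i\}$ by a single element. Removing $i$ empties $Y$, so I must add some $j\neq i$, producing $Y=\{i,j\}$. The condition Almost requires $|\{i,j\}\setminus\{w_i\}|\leq 1$; since $i \neq w_i$ already occupies that slot, this forces $j=w_i$. The resulting pair $(\{i,w_i\},\{w_i\})$ satisfies No Bullying trivially, because any victim $x$ would need $w_i \succ_i x$, which is impossible.

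\emph{Second claim, right neighbors.} A right neighbor fixes $Y=\{i\}$ and modifies $Z=\{w_i\}$. Removing $w_i$ empties $Z$, so I must add some $j\neq w_i$. Then $j \succ_i w_i$ (since $w_i$ is $i$'s worst), so $\best_i(\{w_i,j\})=j$; taking $x=w_i$ gives $\best_i(Z)\succ_i x$, violating No Bullying. Hence $(\{i,w_i\},\{w_i\})$ is the only neighbor. I do not anticipate any genuine obstacle: the single driving observation is that, when $Y$ is a singleton $\{i\}$, No Bullying is equivalent to $\best_i(Z)$ being $\succ_i$-minimal on all of $I$, and once this is in hand both halves of the lemma reduce to short case checks.
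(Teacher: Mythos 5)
Your proof is correct and follows essentially the same route as the paper. The one small stylistic difference is that in the uniqueness argument you extract $\best_i(Z)=w_i$ directly from No Bullying and then observe that $Z=\{w_i\}$ must follow (since $w_i$ is globally $\succ_i$-worst), rather than first invoking Optimality to get $|Z|=1$ as the paper does; and for right neighbors you give a one-line direct check instead of citing the uniqueness claim just proved. Both variants are sound and do not change the substance of the argument.
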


\begin{proof}
$B_i$ is a candidate: clearly, \alm\ holds; \nb\ holds by definition of $w_i$. If also $(\{i\},Z)$ is a candidate, $Z$ is a singleton by \opt. And by \nb, this singleton must be $\{w_i\}$.

Suppose that $i \neq w_i$. $B_i$ has no right neighbor as it is the only candidate with first component $\{i\}$. If $(Y, \{w_i\})$ is a left neighbor, then $Y$ cannot be empty. To satisfy \alm, the only element we can add to $\{i\}$ is $w_i$. By monotonicity, this $(\{i,w_i\}, \{w_i\})$ also satisfies \nb.
\end{proof}

\begin{lemma}\label{lemma: Y and Z equal size}
If candidate $(Y,Z)$ has $|Y| = |Z| \geq 2$ and $Z \setminus Y = \{k\}$ for some $k \in I$, then $(Y, Z \setminus \{k\})$ and $(Y \cup \{k\}, Z)$ are its only neighbors.
\end{lemma}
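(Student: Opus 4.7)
The plan is to pin down exactly which single-element moves away from $(Y,Z)$ preserve both \alm\ and \nb. My starting observation is that, since $(Y,Z)$ is a candidate, it automatically satisfies \opt, and \opt\ together with $|Y|=|Z|$ forces $i\mapsto\best_i(Z)$ to be a \emph{bijection} from $Y$ onto $Z$; in particular $Y\setminus Z$ is some singleton $\{j\}$ paired with $Z\setminus Y=\{k\}$, and every element of $Z$ is the top pick of exactly one member of $Y$.

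I would then verify the two listed pairs first. For $(Y,Z\setminus\{k\})$: since $k\notin Y$, shrinking $Z$ to $Z\setminus\{k\}$ leaves $Y\setminus Z$ unchanged, so \alm\ is preserved, and \nb\ is preserved by monotonicity as the second component shrinks; nonemptiness of $Z\setminus\{k\}$ uses $|Z|\geq 2$. For $(Y\cup\{k\},Z)$: adding $k\in Z$ to $Y$ does not enlarge $Y\setminus Z$, so \alm\ is preserved, and \nb\ is preserved by monotonicity as the first component grows.

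Then I would rule out the four remaining elementary moves. Adding some $m\notin Z$ to $Z$ produces $|Z\cup\{m\}|=|Y|+1$, so by pigeonhole some $x\in Z\cup\{m\}$ is nobody's best in $Y$; strictness of preferences then forces $\best_i(Z\cup\{m\})\succ_i x$ for every $i\in Y$, killing \nb. Removing some $m\in Y\cap Z$ from $Z$ enlarges $Y\setminus Z$ to $\{j,m\}$, killing \alm. Adding some $i\notin Y$ other than $k$ to $Y$ forces $i\notin Z$ (the only element of $Z\setminus Y$ being $k$), so $(Y\cup\{i\})\setminus Z=\{j,i\}$, killing \alm. Finally, removing some $i\in Y$ from $Y$, which is allowed because $|Y|\geq 2$, is the subtle case: by bijectivity, $\{\best_m(Z):m\in Y\setminus\{i\}\}=Z\setminus\{\best_i(Z)\}$, and strictness of preferences then lets $x=\best_i(Z)\in I$ serve as a bullying victim for $Y\setminus\{i\}$, killing \nb.

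The main subtlety, rather than an obstacle, is the last case: recognizing that the failure of \opt\ triggered by breaking the bijection is automatically promoted to a failure of \nb\ via strict preferences. Everything else is bookkeeping on cardinalities of $Y\setminus Z$ together with the monotonicity principles stated just before the lemma.
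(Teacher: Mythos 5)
Your proof is correct and follows essentially the same route as the paper: verify the two listed pairs via monotonicity and \alm-bookkeeping, then rule out the remaining one-element moves by cardinality counts. The only cosmetic difference is that where the paper invokes its prior observation that every candidate satisfies \opt\ (and hence appeals directly to an \opt\ violation), you re-derive that failure inline by exhibiting an explicit bullying victim from the pigeonhole argument.
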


\begin{proof}
If $(Y, Z')$ is a right neighbor, then $Z' = Z \setminus \{k\}$: since $|Y| = |Z|$, adding an element to $Z$ would contradict \opt. And to satisfy \alm, the only element we can remove is $k$. By monotonicity, $(Y, Z \setminus \{k\})$ satisfies \nb.

Likewise, if $(Y',Z)$ is a left neighbor, then $Y' = Y \cup \{k\}$: since $|Y| = |Z|$, removing an element from $Y$ would contradict \opt. And to satisfy \alm, the only element we can add is $k$. By monotonicity, $(Y \cup \{k\}, Z)$ satisfies \nb.
\end{proof}

\begin{lemma}\label{lemma: Y larger than Z}
If candidate $(Y,Z)$ has $|Y| = |Z| + 1$, let $j_1$ and $j_2$ be the unique pair of distinct elements of $Y$ with $\best_{j_1}(Z) = \best_{j_2}(Z)$. For $j \in \{j_1, j_2\}$, let the possible bullying victims of $Y \setminus \{j\}$ be
\[
V_j := \{x \in I: \best_i(Z) \succ_i x \text{ for all } i \in Y \setminus \{j\}\}.
\]
If $V_j = \emptyset$, then $(Y \setminus \{j\}, Z)$ is a neighbor. If $V_j \neq \emptyset$ and $x_j$ is its $\succ_j$-worst element, then $(Y, Z \cup \{x_j\})$ is a neighbor. This describes two neighbors, one for each $j \in \{j_1, j_2\}$. There are no others.
\end{lemma}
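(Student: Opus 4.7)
My plan is to run through the four possible single-element changes between $(Y,Z)$ and a neighbor $(Y',Z')$ and see which yield candidates. Since $(Y,Z)$ is a candidate with $|Y|=|Z|+1$, \alm\ forces $Z \subsetneq Y$ and $|Y \setminus Z|=1$; any neighbor must keep $|Y'\setminus Z'|\le 1$. Adding an element to $Y$ or removing one from $Z$ would both push $|Y'\setminus Z'|$ to $2$, violating \alm. So only two kinds of move remain: removing some $j \in Y$ (left neighbor) or adding some $x \notin Z$ to $Z$ (right neighbor). Throughout I write $z^* := \best_{j_1}(Z) = \best_{j_2}(Z)$.

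For the left side, \alm\ holds automatically, so $(Y\setminus\{j\}, Z)$ is a candidate iff it satisfies \nb. Since every candidate satisfies \opt, and since the map $i \mapsto \best_i(Z)$ from $Y$ onto $Z$ has $\{j_1,j_2\}$ as its only nonsingleton fiber, removing any $j \notin \{j_1, j_2\}$ deletes $\best_j(Z) \in Z$ from the image; that missing element is then a universal victim and kills \nb. For $j \in \{j_1, j_2\}$ the image still equals $Z$ (the partner index supplies $z^*$), and what remains of \nb\ is exactly the statement $V_j = \emptyset$. So the left neighbors are precisely the $(Y \setminus \{j\}, Z)$ with $j \in \{j_1, j_2\}$ and $V_j = \emptyset$.

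For the right side let $Z' := Z \cup \{x\}$ and $W := \{i \in Y : x \succ_i \best_i(Z)\}$, the set of children whose favorite switches to $x$; then $\best_i(Z') = x$ on $W$ and $\best_i(Z') = \best_i(Z)$ off $W$. Via \opt\ (forced by \nb) the image $\{\best_i(Z'):i\in Y\}$ must equal $Z'$, which needs $W\ne\emptyset$ (to hit $x$) and $W\subseteq\{j_1,j_2\}$ with $|W|=1$ (to avoid erasing any element of $Z$). Writing $W=\{j\}$, the conditions $x \succ_j z^*$ and $\best_i(Z) \succ_i x$ for every $i \in Y \setminus \{j\}$ hold, which says exactly $x \in V_j$. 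Plugging the new bests into \nb\ for $(Y,Z')$ then reduces it to: no $v \in V_j$ satisfies $x \succ_j v$. Since $x \in V_j$, this forces $x$ to be the $\succ_j$-minimum of $V_j$, i.e.\ $x = x_j$.

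The last piece is to confirm that this $x_j$ really does switch $j$'s favorite, i.e.\ $x_j \succ_j z^*$; otherwise the proposed right neighbor degenerates into the already excluded $W=\emptyset$ case. Here I would invoke \nb\ of $(Y,Z)$ itself: no $v \in I$ has $\best_i(Z) \succ_i v$ for all $i \in Y$, so in particular no $v \in V_j$ has $z^* \succ_j v$; combined with $z^* \notin V_j$ (the partner $j' \in \{j_1,j_2\}\setminus\{j\}$ has $\best_{j'}(Z)=z^*$, so $z^*$ fails the defining strict inequality), totality of $\succ_j$ forces $v \succ_j z^*$ for every $v \in V_j$, including $v = x_j$. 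The main obstacle I anticipate is simply keeping the right-neighbor bookkeeping clean; once one observes that $W$ must be a singleton inside $\{j_1,j_2\}$ and that the added $x$ must be $\succ_j$-minimal in $V_j$, the cases $V_j=\emptyset$ vs.\ $V_j\ne\emptyset$ produce exactly one neighbor per $j \in \{j_1, j_2\}$, as the lemma claims.
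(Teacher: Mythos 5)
Your proof is correct and follows essentially the same route as the paper's. You rule out additions to $Y$ and removals from $Z$ via $\alm$, then characterize left neighbors by noting that $\opt$ (itself forced by $\nb$) only permits removing $j_1$ or $j_2$, and right neighbors by tracking which agents switch their favorite to the added element; the set $W$ you introduce is a clean bookkeeping device that plays exactly the role of the paper's observation that ``one $j \in \{j_1, j_2\}$ has the added $t$ as the most preferred element.'' Your verification that $x_j \succ_j z^*$ (so that $\best_j(Z\cup\{x_j\})=x_j$ and the sufficiency direction goes through) is slightly more elaborate than the paper's one-line contradiction with $\nb$ of $(Y,Z)$, but it is the same idea in contrapositive form.
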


\begin{proof}
If $(Y', Z)$ is a left neighbor, then $Y' = Y \setminus \{j\}$ for some $j \in \{j_1, j_2\}$: removing any other element of $Y$ would violate \opt\ and adding an element to $Y$ would violate \alm. This $(Y \setminus \{j\}, Z)$ satisfies \alm\ by monotonicity. It satisfies \nb\ if and only if $V_j = \emptyset$.

We prove: $(Y, Z')$ is a right neighbor if and only if $Z' = Z \cup \{x_j\}$ for some $j \in \{j_1, j_2\}$ with $V_j \neq \emptyset$.

A right neighbor $(Y, Z')$ adds an element, say $t$, to $Z$: removing one violates \alm. Now $|Y| = |Z'|$, so by \opt\ each element of $Z'$ is most preferred in $Z'$ by a single $i \in Y$. As $j_1$ and $j_2$ no longer have the same favorite, one $j \in \{j_1, j_2\}$ has the added $t$ as the most preferred element and all other $i \in Y \setminus \{j\}$ still prefer an element of $Z$ to $t$. So $t \in V_j$. By \nb\ of $(Y, Z \cup \{t\})$, there is no $x$ with $\best_i(Z \cup \{t\}) = \best_i(Z) \succ_i x$ for all $i \in Y \setminus \{j\}$ and $\best_j(Z \cup \{t\}) = t \succ_j x$, i.e., no $x \in V_j$ with $t \succ_j x$. So $t \in V_j$ is $j$'s worst element $x_j$ of $V_j$.

It remains to show that $(Y, Z \cup \{x_j\})$ is a candidate for all $j \in \{j_1, j_2\}$ with $V_j \neq \emptyset$. \alm\ holds by monotonicity. Does \nb\ hold? Since $x_j \in V_j$, each $i \in Y \setminus \{j\}$ has $\best_i(Z \cup \{x_j\}) = \best_i(Z)$. Then $\best_j(Z \cup \{x_j\}) = x_j$: otherwise $\best_i(Z) \succ_i x_j$ for all $i \in Y$, contradicting \nb\ of $(Y,Z)$. If $(Y, Z \cup \{x_j\})$ violates \nb, there is an $x \in I$ with $\best_i(Z \cup \{x_j\}) = \best_i(Z) \succ_i x$ for all $i \in Y \setminus \{j\}$ and $\best_j(Z \cup \{x_j\}) = x_j \succ_j x$. So $x \in V_j$ and $x_j \succ_j x$. But $x_j$ is $j$'s worst member of $V_j$, a contradiction.
\end{proof}

If $V_j \neq \emptyset$ for both $j \in \{j_1, j_2\}$, the members of $Y \setminus \{j\}$ can bully $x_j$. But then $x_{j_1} \neq x_{j_2}$: otherwise $(Y,Z)$ violates \nb. In particular, Lemma \ref{lemma: Y larger than Z} always produces exactly two neighbors.

We finally show that there is a candidate $(Y,Z)$ with $Y = Z$. Fix $i \in I$. Look at candidate $(\{i\}, \{w_i\})$ in Lemma \ref{lemma: singleton contender}. If $i = w_i$, we're done. If not, move to its only neighbor $(\{i, w_i\}, \{w_i\})$. Now proceed recursively: as long as we're at a candidate $(Y,Z)$ with $|Y| \geq 2$ and $Y \neq Z$, Lemma \ref{lemma: Y and Z equal size} or \ref{lemma: Y larger than Z} assures that $(Y,Z)$ has exactly two neighbors. So we can move to a neighbor other than the one we just came from. Suppose this path of neighboring candidates does \emph{not\/} reach a candidate $(Y,Z)$ with $Y = Z$. As the set of candidates is finite, only two things can happen: (1) the path cycles, revisiting a candidate we encountered before; or (2) we come to a candidate without two neighbors, i.e., a candidate $(Y,Z)$ where $Y$ is a singleton. But both lead to a contradiction:

If it cycles, pick the first candidate $(Y,Z)$ to be revisited. Can it have $|Y| \geq 2$? It can only be reached via one of its two neighbors. But that neighbor was passed on the first visit as well, either going to $(Y,Z)$ or coming from $(Y,Z)$. So $(Y,Z)$ is not the first to be revisited. Likewise, it can't have $|Y| = 1$, which can only be arrived at via its unique neighbor. So cycling gives a contradiction.

If we reach a candidate with singleton first component, it is of the form $(\{j\}, \{w_j\})$ for some $j \in I$ by Lemma \ref{lemma: singleton contender}. We assumed $j \neq w_j$. Reviewing the statements of Lemma \ref{lemma: singleton contender} to \ref{lemma: Y larger than Z}, note that if candidate $(Y,Z)$ with $Y \neq Z$ has neighbor $(Y',Z')$, then $Y' \setminus Z' \subseteq Y \setminus Z$. So each candidate $(Y',Z')$ on the path from $(\{i\}, \{w_i\})$ has $Y' \setminus Z' \subseteq \{i\}$. Since $\{j\} \setminus \{w_j\} = \{j\}$, it follows that $j = i$: the path returns to $(\{i\}, \{w_i\})$. A contradiction, since we ruled out cycles.

\section{Concluding remarks}\label{sec: conclusion}

\textbf{General:} The goal of this paper was to give a very elementary proof of Brouwer's fixed-point theorem: if you know Bolzano-Weierstrass, you're good to go! The proof follows the strategy of standard combinatorial proofs (see \citeauthor{Scarf1982}'s \citeyearpar{Scarf1982} overview) to find vectors satisfying a system of inequalities and then apply a limit argument. But in contrast with such proofs, our no-bullying lemma that helps to produce the inequalities requires no knowledge about simplicial subdivisions or triangulations, the facial structure of polytopes, and the dimension and boundaries of such faces relative to suitably chosen affine hulls. With the no-bullying lemma, it is possible to prove the fixed-point theorem rigorously at an early stage of the undergraduate curriculum.

We purposely framed the lemma as a playful story about children and toys that hopefully makes it easier to remember.

\textbf{Proof variants:} It might seem inefficient to provide two versions of the no-bullying lemma. But proving the simpler version (Lemma \ref{lemma: no bullying}) and deriving the second as a corollary (Corollary \ref{cor: no bullying}) requires substantially less cumbersome notation. Similarly, appropriately rephrased versions of the no-bullying lemma hold if we allow weak instead of strict preferences, but our simpler case already produces the results we need.

To prove existence of approximate fixed points in Lemma \ref{lemma: AFP}, we applied the no-bullying lemma to a sufficiently fine grid of points in the simplex. For simplicity, we gave each coordinate finitely many feasible values $0$, $1/N$, \ldots, $N/N$, for some large integer $N$. But there is considerable freedom. A sufficient condition for a grid $T$ to contain such an approximate fixed point is that for each $z \in \mathbb{R}^n_+$ with $\sum_j z_j \leq 1 - \varepsilon/2$ there is a $t \in T$ with $z_j < t_j$ for all $j$. Indeed, from property \ref{O2} in Lemma \ref{lemma: AFP}, we want a particular inequality: $\sum_{i \in C} \beta_i(i) > 1 - \varepsilon/2$. The sufficient condition on $T$ assures this, but looks less appetizing than providing a natural grid explicitly.

\textbf{Topics of ongoing work:} A useful consequence of the freedom to choose a grid is that if one grid works, then so does any finer grid. To fine-tune algorithms searching for better approximations of a fixed point, this option to add grid points wherever we please can be an advantage over traditional simplicial algorithms where points in the grid typically need to satisfy additional topological/affine independence/nondegeneracy assumptions. So even though the points in our grid happen to be the vertices of a simplicial subdivision \citep[p.~1240]{Kuhn1968}, this is of no relevance to our proof.

We leave it to subsequent work to address such algorithmic aspects in detail. We do briefly mention two results. Firstly, we proved the no-bullying lemma using a path-following algorithm. This algorithm can be shown to belong to a complexity class PPAD (Polynomial Parity Argument for Directed graphs) introduced in \citet{Papadimitriou1994}. Problems in that complexity class are typically geometric, searching for a point in a Euclidean space satisfying certain conditions. In that sense, the no-bullying problem is a distinctive member of the PPAD class, coming from a purely combinatorial setting of agents with preferences over finitely many alternatives.

Secondly, Scarf's algorithm finds approximate fixed points in so-called primitive sets; \citet{Tuy1979} generalizes this approach in a more abstract setting and finds `completely labeled primitive sets' with properties similar to those satisfying the conditions of the no-bullying lemma with distinct sets of children and toys.\footnote{More precisely: in the notation of Corollary \ref{cor: no bullying}, he also considers a finite collection $(\succ_i)_{i \in I}$ of preferences over a set $T$, but needs to extend those preferences to $T \cup I$. If one reverts the preferences on $T$ --- he characterizes his sets in terms of least preferred elements, whereas we are interested in most preferred ones --- then a pair $(C, E)$ satisfying the conditions of our Corollary \ref{cor: no bullying} corresponds with a completely labeled primitive set $U := E \cup (I \setminus C) \subset T \cup I$ in \citet{Tuy1979}.} His path-following algorithm and ours are similar in the sense of obtaining the desired sets using a series of small changes, but distinct in other senses.\footnote{For instance, primitive sets always have the same number of elements, whereas the size $|Y| + |Z|$ of candidates $(Y, Z)$ in our proof may change. Moreover, primitive sets do not refer to ownership/labels, whereas candidates --- through \alm\ --- do. The latter helps to keep the graph relatively small.}

\appendix
\section{Appendix: The KKM Lemma}

Also another classical result in economic theory, the eponymous KKM lemma of \citet{KKM1929}, follows directly from the no-bullying lemma. Their proof used Sperner's lemma; \citet[Sec. 9]{Border1985} gives two proofs using Brouwer. As before, fix $n \in \mathbb{N}$ and $\Delta = \{x \in \mathbb{R}^n: x_1, \ldots, x_n \geq 0 \text{ and } \sum_{i=1}^n x_i = 1\}$. For each nonempty $J \subseteq \{1, \ldots, n\}$, let $\Delta_J = \{x \in \Delta: x_j = 0 \text{ for all } j \in \{1, \ldots, n\} \setminus J\}$.

\begin{theorem} [KKM Lemma]
If $X_1, \ldots, X_n$ are closed subsets of $\Delta$ such that $\Delta_J \subseteq \bigcup_{j \in J} X_j$ for each nonempty $J \subseteq \{1, \ldots, n\}$, then $\bigcap_{i=1}^n X_i \neq \emptyset$.
\end{theorem}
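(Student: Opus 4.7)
The plan is to deduce the KKM lemma from Brouwer's fixed-point theorem --- which the paper has just established via the no-bullying lemma and Lemma \ref{lemma: AFP} --- by constructing a continuous self-map of $\Delta$ whose fixed points are automatically common points of all the $X_i$.

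First I would introduce, for each $i \in \{1, \ldots, n\}$, the distance function $d_i: \Delta \to [0, \infty)$ defined by $d_i(x) = \inf_{y \in X_i} \|y - x\|$. Closedness of $X_i$ guarantees that $d_i$ is continuous (in fact $1$-Lipschitz) and vanishes exactly on $X_i$. Next I would define $f: \Delta \to \Delta$ coordinatewise by
\[
f_i(x) = \frac{x_i + d_i(x)}{1 + \sum_{j=1}^n d_j(x)},
\]
and record the routine facts that $f$ is continuous and maps $\Delta$ into itself: the denominator is bounded below by $1$, the numerators are nonnegative, and summing the numerators over $i$ reproduces the denominator, so $\sum_i f_i(x) = 1$.

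Then I would apply Brouwer to obtain a fixed point $x^* = f(x^*) \in \Delta$. Setting $D := \sum_{j=1}^n d_j(x^*)$, the fixed-point equations rearrange to $x^*_i \cdot D = d_i(x^*)$ for every $i$. To finish, let $J^* := \{j : x^*_j > 0\}$, which is nonempty because coordinates sum to $1$; then $x^* \in \Delta_{J^*}$, and the KKM hypothesis $\Delta_{J^*} \subseteq \bigcup_{j \in J^*} X_j$ supplies some $j_0 \in J^*$ with $x^* \in X_{j_0}$, whence $d_{j_0}(x^*) = 0$. Substituting back, $x^*_{j_0} \cdot D = 0$ with $x^*_{j_0} > 0$ forces $D = 0$, so every $d_i(x^*) = 0$; closedness of each $X_i$ then places $x^*$ in $\bigcap_{i=1}^n X_i$.

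The main obstacle is really just arriving at the definition of $f$: once one sees that adding $d_i(x)$ to the $i$-th numerator inflates the image's $i$-th coordinate precisely when $x \notin X_i$, and that dividing by $1 + \sum_j d_j(x)$ renormalizes cleanly back to $\Delta$, the verification and the algebraic extraction of the KKM point are bookkeeping --- Brouwer, and hence the no-bullying lemma, does all the heavy lifting.
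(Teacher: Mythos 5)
Your proof is correct, but it takes a genuinely different route from the paper's. You deduce KKM from Brouwer via the standard construction of a continuous self-map built from the distance functions $d_i(x) = \inf_{y \in X_i} \lVert y - x \rVert$: a fixed point of the renormalized map $f$ is forced into every $X_i$ by the vanishing of all $d_i$. The paper instead derives KKM \emph{directly} from the no-bullying lemma (Corollary~\ref{cor: no bullying}), in close parallel to the proof of Lemma~\ref{lemma: AFP}: it applies the corollary on a fine grid $T$ in $\Delta$, with child $i$ preferring points with \emph{larger} $i$-th coordinate, endows each toy $x$ to some $\ell(x)$ with $x_{\ell(x)} > 0$ and $x \in X_{\ell(x)}$ (such an index exists by the covering hypothesis applied to $J_x = \{j : x_j > 0\}$), shows that $C$ must be all of $\{1,\ldots,n\}$ (so $E$ meets every $X_i$) and that the exchanged toys lie within $\varepsilon$ of each other, and then finishes with Bolzano--Weierstrass. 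Your route is shorter and leans on the full strength of Brouwer, which the paper has already established; the paper's route makes the appendix's stated point, namely that the combinatorial no-bullying lemma is by itself strong enough to yield KKM (and from it Sperner) without detouring back through a fixed-point theorem. One small point to close in your write-up: for each $d_i$ to be real-valued you need $X_i \neq \emptyset$, which follows from the hypothesis with $J = \{i\}$, giving $\Delta_{\{i\}} = \{\be_i\} \subseteq X_i$.
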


\begin{proof}
It suffices to show that for each $\varepsilon > 0$ there is a set $S(\varepsilon) \subseteq \Delta$ with
\begin{enumerate*}[label=\emph{(\roman*)}]
\item\label{kkm1} an element in each $X_i$ and
\item\label{kkm2} for all $x, y \in S(\varepsilon)$, $\max_i \lvert x_i - y_i \rvert < \varepsilon$.
\end{enumerate*}
Indeed, let positive $\varepsilon_1, \varepsilon_2, \ldots$ tend to zero and let $S(\varepsilon_1), S(\varepsilon_2), \ldots$ be such corresponding sets. By \ref{kkm1}, for each $\varepsilon_m > 0$ and coordinate $i$, there is an $x_{i,m} \in S(\varepsilon_m) \cap X_i$. Sequence $(x_{1,m})_{m}$ lies in compact set $\Delta$. By Bolzano-Weierstrass, we may pass to a subsequence if necessary and assume it converges to some $x^* \in \Delta$. It also lies in $X_1$, which is closed. So $x^* \in X_1$. By \ref{kkm2}, the distance between the elements of $S(\varepsilon_m)$ goes to zero as $\varepsilon_m \to 0$. So for \emph{each\/} coordinate $i$: $x_{i,m} \to x^*$ and $x^* \in X_i$. Hence, $x^* \in \bigcap_{i=1}^n X_i$.

So let $\varepsilon > 0$. Let positive integer $N$ have $2n/N < \varepsilon$. Apply Corollary \ref{cor: no bullying} to $I = \{1, \ldots, n\}$ and
\[
T = \{x \in \Delta: x_i \in \{0/N, 1/N, \ldots, N/N\} \text{ for all } i = 1, \ldots, n\}.
\]
For each $i \in I$, let strict preferences $\succ_i$ be such that $x \succ_i y$ whenever $x_i > y_i$: $i$ prefers larger $i$-th coordinates. For $x \in T$, let $J_x = \{j \in I: x_j > 0\}$. Then $x \in \Delta_{J_x} \subseteq \cup_{j \in J_x} X_j$, so there is a $j$ with $x_j > 0$ and $x \in X_j$. Define $\ell: T \to I$ by letting $\ell(x)$ be any such $j$. By Corollary \ref{cor: no bullying}, a pair of subsets $C$ of $I$ and $E$ of $T$ satisfy Ownership, Optimality, and No Bullying. We show that $S(\varepsilon) := E$ satisfies \ref{kkm1} and \ref{kkm2}.

\ref{kkm1}: Each $i \in C$ gets her best toy $\beta(i) := \best_i(E)$ in $E$: $\beta_i(i) \geq x_i$ for all $x \in E$. If $\beta_i(i) = 0$ for some $i \in C$, then $x_i = 0$ and hence $\ell(x) \neq i$ for each $x \in E$, contradicting Ownership. So $\beta_i(i) > 0$ for all $i \in C$. If $k \in I \setminus C$, each $i \in C$ then prefers $\beta(i)$ to the $k$-th standard basis vector $\be_k$ of $\mathbb{R}^n$, contradicting No Bullying. So $I$ equals $C = \{\ell(x): x \in E\}$: for each $i \in I$, some $x \in E$ has $\ell(x) = i$. Hence $x \in X_i$.

\ref{kkm2}: For each $i \in I$, $\beta_i(i) > 0$ and $\beta(i) \in T$ imply $\beta_i(i) \geq \frac{1}{N}$. Also, $\sum_{i \in I} \beta_i(i) < 1 + \frac{n}{N}$: otherwise, $\sum_{i \in I} \left(\beta_i(i) - \frac{1}{N}\right) \geq 1$, so there is an $x \in T$ with $0 \leq x_i \leq \beta_i(i) - \frac{1}{N}$ for all $i$. Then $x_i < \beta_i(i)$, so $\beta(i) \succ_i x$ for all $i \in I$, contradicting No Bullying. For all $i, j \in I$, Optimality implies $\beta_i(i) \geq \beta_i(j)$, so
\[
0 \leq \beta_i(i) - \beta_i(j) \leq \sum_{k \in I} \left(\beta_k(k) - \beta_k(j)\right) = \sum_{k \in I} \beta_k(k) - 1 < \frac{n}{N}.
\]
Finally, let $x, y \in E$ and $i \in I$. By Optimality, there are $j, k \in I$ with $x = \beta(j)$ and $y = \beta(k)$, so
\[
\lvert x_i - y_i \rvert = \lvert \beta_i(j) - \beta_i(k) \rvert \leq \lvert \beta_i(j) - \beta_i(i) \rvert + \lvert \beta_i(i) - \beta_i(k) \rvert < \frac{2n}{N} < \varepsilon. \qedhere
\]
\end{proof}

Sperner's lemma can be proved by applying the KKM lemma to particular sets $X_i$; see \cite{Voorneveld2017}. So with minor changes the proof above can be rewritten to derive Sperner's lemma from the no-bullying lemma.

\bibliographystyle{abbrvnat}
\bibliography{BrouwerReferences}
\end{document}